%% file: main.tex
\documentclass[11pt,a4paper]{amsart}
\input{preamble.tex}

\makeatletter
\def\author@andify{%
  \nxandlist {\unskip ,\penalty-1 \space\ignorespaces}%
    {\unskip {} \@@and~}%
    {\unskip \penalty-2 \space \@@and~}%
}
\renewcommand{\andify}{%
  \nxandlist{\unskip, }{\unskip{} \@@and~}{\unskip \space \@@and~}}
\makeatother

\title[Metric-Free Riemannian Optimization]{Metric-Free Riemannian Optimization}

\author[J. Püschel]{Jonas Püschel}

\address[J. Püschel]{Institute of Mathematics, University of Augsburg, 
Universit\"atsstra{\ss}e~12a, 86159 Augsburg, Germany}
\email{jonas.pueschel@uni-a.de}

\thanks{Funded by the Deutsche Forschungsgemeinschaft (DFG, German Research Foundation) – 571768116.}

\begin{document}

\begin{abstract}
Riemannian optimization provides a powerful framework for constrained optimization by incorporating problem-specific structure directly into the geometry of the search space. In many applications, however, the explicit evaluation or application of the Riemannian metric can be computationally expensive or numerically unstable, limiting the practical efficiency of otherwise well-founded algorithms. Motivated by such settings, this work investigates to what extent classical Riemannian optimization algorithms can be reformulated without explicitly applying the metric.
We show that many first-order components of Riemannian optimization only rely  on the differential of the objective function and access to the Riemannian gradient, but not on explicit metric application. Based on this observation, we develop metric-free formulations and generalize optimization approaches to Finsler and Banach manifolds. 
Numerical experiments demonstrate that the proposed metric-free strategies retain the effectiveness of their metric-dependent counterparts while significantly reducing computational overhead. These results highlight that a substantial portion of Riemannian optimization can be carried out independently of explicit metric application, broadening its applicability to problems with expensive or implicitly defined metrics.
\end{abstract}

\maketitle

{\tiny {\bf Key words:} 
Riemannian optimization, 
Riemannian metric,
conjugate gradient method, 
preconditioning}

{\tiny {\bf AMS subject classifications.} 
49Q99, 53B21, 65K05
}

\section{Introduction}

Riemannian optimization is a type of optimization that does not take place in regular Euclidean space, but rather on a Riemannian manifold. It is useful, when the problem domain has some geometric or structural constraints, specifically on matrix manifolds \cite{Absil2007-ab}, as can be found in 
computer vision and signal processing \cite{ADMMS2002, DFVN2013}, low-rank matrix and tensor completion \cite{Kressner2014, Vandereycken2013}, quantum chemistry \cite{AltPS22, AltPS24, DaiLZZ17, PPS25} and quantum computing \cite{Le2025, Wiersema2023}. The main advantage of Riemannian optimization in these application is that instead of using Lagrange multipliers or projections, it is able to treat constraints as part of the geometry and thus preserve them by default. It also allows a more rigorous mathematical analysis of algorithms by using generalized results from unrestricted optimization, for example Riemmanian gradient descent (RGD) and Riemannian conjugate gradient (RCG). Especially for the latter one, a lot of metric applications are needed in order to calculate a sufficient step size and the the CG-parameter.

However in specific cases, the Riemannian metric is expensive to calculate explicitly and thus, applying the metric should be avoided if possible. This includes the energy-adaptive framework for nonlinear eigenvalue problems \cite{AltPS22, HerSY25, PPS25}, where evaluation of the metric equates to applying the Hamiltonian. In this paper we investigate how to reformulate algorithms in Riemannian optimization, such that applying the metric and calculation of the gradient at trial step sizes can be completely circumvented. A lot of first-order quantities in Riemannian optimization do not depend on the specific 
metric, but only on the differential of the objective function. We show that as long as one has access to the differential and the Riemannian gradient, one can formulate descent conditions, line search conditions, step size algorithms and algorithms like Riemannian conjugate gradient completely without ever explicitly applying the metric. This also highlights the fact that a lot of quantities are metric-independent and could in theory be used in a Finsler or even Banach manifold setting. 

The remaining part of this paper is structured as follows. In Section 2, we recall the basics of Riemannian optimization and necessary tools. In Section 3 we discuss the well-known symmetric preconditioned conjugate gradient method (PCG) for linear symmetric positive definite matrices as an example where expensive inner product application can be circumvented. Section 4 extends on this concept of by showing that a multitude of ingredients for Riemannian optimization methods can be calculated without applying the Riemannian metric, which we refer to as metric-free. In Section 6, we discuss extension of these quantities to Finsler and Banach manifolds. Effectiveness of metric-free strategies is illustrated via numerical experiments in Section 6.

\section{Preliminaries}

This section briefly introduces the fundamental concepts of Riemannian optimization, basic Riemannian geometry on the one hand and tools of manifold optimization on the other hand. Although in practice one primarily uses finite-dimensional manifolds \cite{Absil2007-ab}, the following quantities can also be defined in the infinite-dimensional case \cite{Lang1999}.

\subsection{Riemannian Gradient}

A \emph{Riemannian manifold} is a tuple  $(\M, \langle \cdot, \cdot \rangle )$ with smooth manifold $\M$ and {Riemannian metric} $\langle \cdot, \cdot \rangle$. A \emph{Riemannian metric} (or short metric) is an inner product on the tangent space smoothly depending on $x \in \M$. Specifically, $\langle \cdot, \cdot\rangle_x$ induces an inner product on $\Tan_x \M$ for all $x \in \M$. For infinite dimensional Riemannian manifolds, we require the tangent space to be complete with respect to the induced norm, meaning $(\Tan_x\M, \langle \cdot, \cdot \rangle_x)$ is a Hilbert space. In literature, this is called a \emph{strong Riemannian metric} \cite{Lang1999}. In contrast, if the tangent space is not complete with respect to the induced norm, we only have a \emph{weak Riemannian metric}. In this stting, it is still possible to do Riemannian optimization, however additional care needs to be taken \cite{ZalPS26}.

For a differentiable function $f \colon \M \to \IR$, the \emph{Riemannian gradient} of $f$ at $x \in \M$, $\grad f(x)$ is defined as the Riesz representative of the derivative, meaning for all $v \in \Tan_x\M$
\begin{equation}\label{eq:riem_grad}
\langle \grad f(x), v\rangle_x = \Drm f(x)[v].
\end{equation}
Here the Riesz representation theorem guarantees existence and uniqueness. This highlights the need of a (strong) Riemannian metric, as otherwise existence of the gradient cannot be guaranteed without additional assumptions. Also, different metrics induce different Riemannian gradients. 

\subsection{Retractions}

In a Euclidean space $E$, a step of a line search-based method is given by
\begin{equation}\label{eq:grad_desc}
    x_{k+1} = x_k + \alpha_k d_k
\end{equation}
for an iterate $x_k \in E$, a search direction $d_k \in E$ and a step size $\alpha_k > 0$.
However, this no longer makes sense in the framework of Riemannian optimization. Even if addition is well-defined (which is only the case for submanifolds of vector spaces), there is no guarantee that the resulting point is still on the manifold. Thus, Riemannian optimization needs a more general way to perform steps into tangential directions. 

A \emph{retraction} is a smooth mapping that approximates the exponential map on the Riemannian manifold $(\M,\langle \cdot, \cdot \rangle)$ (which with respect to $\langle \cdot, \cdot \rangle$ is the true geometric way to move along geodesics on a manifold). In simple terms, the retraction takes a step in a tangent direction from a point on the manifold and maps it back to the manifold. More formally, a \emph{retraction} $\R$ is a smooth map
\begin{equation}\label{eq:retraction}
    \begin{aligned}
        \R_x \colon \Tan\M &\to \M, \\
        (x,v) &\mapsto \R_x(v)
    \end{aligned}
\end{equation}
that fulfills the following conditions at all $x \in \M$:
\begin{itemize}
    \item There is no movement when taking a step into the zero direction, meaning
    \begin{equation}\label{eq:ret1}
        \R_x(0) = x.
    \end{equation}
    \item The initial direction is preserved, meaning $\R_x$ yields a first-order approximation of the exponential map, meaning for all $v \in \Tan_x\M$
    \begin{equation}\label{eq:ret2}
        \frac{\drm}{ \drm t} \R_x(tv) \Bigg|_{t=0} = v.
    \end{equation}
\end{itemize}
A retraction allows to perform steps in arbitrary tangent directions. The Riemannian reformulation of the iteration \eqref{eq:grad_desc} thus reads
\begin{equation}\label{eq:riem_grad_desc}
    x_{k+1} = \R_{x_k}(\alpha_k d_k)
\end{equation}
for an iterate $x_k \in \M$, search direction $d_k \in \Tan_{x_k}\M$ and step size $\alpha_k > 0$. 

 For a given retraction $\R$ the \emph{inverse retraction} is given by 
 \begin{equation*}
     \begin{aligned}
         \invR_x \colon \M \supset U_x &\to \Tan_x\M, \\
         y &\mapsto \invR_x(y)
     \end{aligned}
 \end{equation*}
 for all $x \in \M$, which fulfills $\R_x(\invR_x(y)) = y$ for all $y \in U_x$ and $\invR_x(\R_x(v)) = v$ for all $v \in \invR (U_x)$, where $U_x$ is an open neighborhood of $x$.

 The canonical way to construct a retraction for a Riemannian manifold $(\M,\langle \cdot, \cdot \rangle)$ is via the exponential map, which for $x \in \M$, $v \in \Tan_x\M$ is $\R_x(v) = \exp_x(v)$. By definition of the exponential, $\exp_x(tv)$ is the geodesic with respect to $\langle \cdot, \cdot \rangle$ with directional derivative $v$ at $x$.

\subsection{Transports} 
Momentum-based methods, like the Riemannian CG 
accumulate gradients or directions across iterations. However, each gradient or direction lives in a different tangent space. Thus, it is necessary to transport these quantities into the tangent space of the current iterate. For a given retraction $\R$ a {vector transport} $\T$  provides a way to transport tangent vectors alongside $\R$. Formally, we call $\T$ a \emph{vector transport} for the associated retraction $\R$ if the following conditions hold:
\begin{itemize}
    \item For all $x \in \M$ and $v, w \in \Tan_x\M$, $\T_v$ transports quantites from $\Tan_{x}\M$ to $\Tan_{\R_x(v)}\M$, thus
    \begin{equation}\label{eq:transport1}
        \T_v(w) \in \Tan_{\R_x(v)}\M.
    \end{equation}
    \item If no step is taken, the transport is the identity, meaning for all $x \in \M$ and $0_v \in \Tan_x\M$ being the zero-element we have
    \begin{equation}\label{eq:transport2}
        \T_{0_v} = \mathrm{id}_{\Tan_x\M}.
    \end{equation}
    \item The transport is homogeneous, meaning for all $x \in \M$, $v, w \in \Tan_x\M$ and $\lambda\in\mathbb R$ holds 
        \begin{equation}\label{eq:transport3}
        \T_v(\lambda w) =  \lambda\T_v(w).
    \end{equation}
\end{itemize}
Some literature, notably \cite[Chapter 8]{Absil2007-ab}, additionally demands $\T_v$ to be a linear map. However, the commonly used scaled transport introduced in \cite{RiWi2012} is only homogeneous, not linear.

A natural way to construct a vector transport associated to a given retraction $\R$ is the differentiated retraction transport, which for $x \in \M$ and $v,w \in \Tan_x\M$ is given by
\begin{equation}\label{eq:diff_ret}
    \T_v^{\R}(w) \coloneqq \Drm \R_x(v)[w],
\end{equation}
which is the directional derivative of $\R_x$ at $v$ in direction $w$. 
One can easily verify that the differentiated retraction transport fulfills \eqref{eq:transport1}, \eqref{eq:transport2} and \eqref{eq:transport3} in addition to being linear. In the special case of $\R$ being the exponential map, the differentiated retraction transport corresponds to the parallel transport by construction. 

\section{A Euclidean Example}
In this section, we discuss the symmetric preconditioned conjugate gradient method for solving symmetric positive definite linear systems as a prototypical example for a method where applying an expensive inner product can be circumvented. Notably, this example also reveals that preconditioning and change of inner product (in the Rimannian case metric) are very closely related and can be viewed as equivalent. 

Let $A, M\in \mathbb R^{n\times n}$ be symmetric positive definite (s.p.d.) matrices with $M \approx A$, and let $b \in \mathbb R^n$. We are looking for $x \in \mathbb R^n$ such that $Ax=b$ via minimizing the energy functional 
\begin{equation}\label{eq:cg_energy}
    \E(x) = \frac{1}{2} x^TAx - x^Tb.
\end{equation}
For that, we formulate the conjugate gradient method with respect to the inner product
\begin{equation}\label{eq:inner_M}
    \langle v,w \rangle_M \coloneqq v^T M w    
\end{equation}
and with initial guess $x_0 \in \mathbb R^n$. We first show how to calculate the gradient of $\E$ in this inner product. 
\begin{lem}\label{lem:1}
    Let $x \in \mathbb R^n$. The gradient of $\E$ at $x$ with respect to $\langle \cdot, \cdot\rangle_M$ is given by 
    $$\nabla_M \E(x) = M^{-1}(Ax-b).$$
\end{lem}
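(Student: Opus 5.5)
The plan is to verify the Riesz characterization \eqref{eq:riem_grad} directly, specialised to the flat space $\mathbb R^n$ with the inner product $\langle \cdot,\cdot\rangle_M$ from \eqref{eq:inner_M}. By the definition of the gradient, $\nabla_M \E(x)$ is the unique vector $g \in \mathbb R^n$ with $\langle g, v\rangle_M = \Drm \E(x)[v]$ for all $v \in \mathbb R^n$. So it suffices to compute the differential of $\E$, check that the candidate $g = M^{-1}(Ax-b)$ satisfies this identity, and invoke uniqueness.

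First, I will compute the directional derivative of the energy \eqref{eq:cg_energy}. Expanding $\E(x+tv)$ and using the symmetry of $A$ gives
\[
\E(x+tv) = \E(x) + t\, v^T(Ax-b) + \tfrac{t^2}{2} v^TAv ,
\]
so that $\Drm \E(x)[v] = (Ax-b)^T v$. Second, I will insert the candidate into the $M$-inner product. Since $M$ is symmetric, so is $M^{-1}$, and therefore
\[
\langle M^{-1}(Ax-b), v\rangle_M = (Ax-b)^T M^{-1} M v = (Ax-b)^T v ,
\]
which matches the differential for every $v$.

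Third, I will settle uniqueness. Because $M$ is s.p.d., $\langle\cdot,\cdot\rangle_M$ is a genuine inner product. If two vectors $g, g'$ both represent $\Drm\E(x)$, then $\langle g-g', v\rangle_M = 0$ for all $v$. Taking $v = g-g'$ forces $g = g'$. This shows the Riesz representative is exactly $M^{-1}(Ax-b)$.

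There is no real obstacle here. The only points needing care are the use of symmetry of $A$ in the differential, the symmetry of $M^{-1}$ when transposing, and invertibility of $M$, all of which follow from the s.p.d. assumptions.
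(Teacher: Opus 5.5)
Your proposal is correct and follows essentially the same route as the paper: compute $\Drm \E(x)[v] = v^T(Ax-b)$, then rewrite it as $\langle v, M^{-1}(Ax-b)\rangle_M$ by inserting $MM^{-1}$. The only additions on your side are the explicit uniqueness argument, which the paper leaves to the Riesz theorem, and the transposition step using symmetry of $M^{-1}$, which the paper avoids by writing the candidate in the second slot.
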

\begin{proof}
    Let $v \in \mathbb R^n$. We have
    \begin{align*}
        \Drm \E(x)[v] &= \frac{1}{2}(x^TAv + v^TAx) - v^Tb 
                \\&= v^T(Ax-b)
                \\&= v^T M M ^{-1} (Ax-b)
                \\&= \langle v, M ^{-1} (Ax-b)\rangle_M.
    \end{align*}
\end{proof}
When defining the residual (meaning Euclidean gradient) at $x$ via $r = Ax-b$ and the $\langle \cdot, \cdot\rangle_M$-gradient $g = M^{-1}r$, we have the following equivalent expressions for $\Drm \E(x)[v]$ for any $v \in \mathbb R^n$
\begin{equation}\label{eq:cg_metric_equality}
    \Drm \E(x) [v] = \langle r, v\rangle = \langle g, v\rangle_M
\end{equation}
where $\langle v,w\rangle \coloneqq v^Tw$ denotes the Euclidean inner product on $\mathbb R^n$. 
We also give a formula for the Hessian with respect to the inner product induced by $M$.
\begin{lem}\label{lem:2}
    Let $x\in \mathbb R^n$. The Hessian of $\E$ with respect to $\langle \cdot, \cdot\rangle_M$ is given by 
    $$\nabla^2_M\E(x) = M^{-1}A.$$
\end{lem}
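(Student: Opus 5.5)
The plan is to identify the Hessian with respect to $\langle \cdot,\cdot\rangle_M$ as the derivative of the gradient field. Since $\langle\cdot,\cdot\rangle_M$ is a constant inner product on $\mathbb R^n$, the Levi-Civita connection is the ordinary directional derivative. The $M$-Hessian is then the linear map $\nabla^2_M\E(x)\colon\mathbb R^n\to\mathbb R^n$ given by $v\mapsto \Drm(\nabla_M\E)(x)[v]$. Equivalently, it is the unique linear operator $H$ that is self-adjoint with respect to $\langle\cdot,\cdot\rangle_M$ and satisfies
\begin{equation*}
\langle H v, w\rangle_M = \Drm^2\E(x)[v,w] \quad \text{for all } v,w\in\mathbb R^n.
\end{equation*}
I will establish the formula through the first characterization and check it against the second as a consistency test.

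First, I invoke Lemma~\ref{lem:1}, which gives $\nabla_M\E(x) = M^{-1}(Ax-b)$. This is an affine function of $x$, so its directional derivative in direction $v$ is $M^{-1}Av$, independently of $x$. Hence $\nabla^2_M\E(x)=M^{-1}A$.

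For the cross-check, the second derivative of the quadratic \eqref{eq:cg_energy} is $\Drm^2\E(x)[v,w]=v^TAw$. Writing this as $v^TMM^{-1}Aw=\langle v, M^{-1}Aw\rangle_M$ exhibits the representing operator, exactly as in the proof of Lemma~\ref{lem:1}. Self-adjointness in the $M$-inner product follows from
\begin{equation*}
\langle v, M^{-1}Aw\rangle_M = v^TAw = (M^{-1}Av)^T M w = \langle M^{-1}Av, w\rangle_M,
\end{equation*}
using the symmetry of $A$ and $M$.

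There is no real obstacle. The only point needing care is which notion of Hessian is meant. It must be the operator representing the second derivative through $\langle\cdot,\cdot\rangle_M$, not the Euclidean matrix $A$, and because the metric is constant no connection terms appear.
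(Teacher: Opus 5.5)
Your proposal is correct. The paper's proof is exactly what you call the cross-check. It expands $\Drm^2\E(x)[v,w]=\tfrac12(v^TAw+w^TAv)=v^TAw$ and inserts $MM^{-1}$ to read off $\langle v, M^{-1}Aw\rangle_M$. In other words, it takes the $M$-Hessian to be the $\langle\cdot,\cdot\rangle_M$-representer of the second derivative. This mirrors the proof of Lemma~\ref{lem:1} line by line, without the self-adjointness check you add.

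Your primary route instead differentiates the gradient field $\nabla_M\E(x)=M^{-1}(Ax-b)$ from Lemma~\ref{lem:1}. That is the standard Riemannian definition $\mathrm{Hess}\,f(x)[v]=\nabla_v\,\grad f$. It is slightly more conceptual because it shows why the two notions coincide here: the metric is constant, so the Levi-Civita connection is the ordinary derivative and no Christoffel terms appear. This is the flatness point the paper later invokes in Remark~\ref{rem:hess}. The paper's argument is shorter and purely algebraic, but it silently relies on that same flatness when it calls the representer ``the Hessian''. Your version makes that justification explicit, at the cost of a little Riemannian machinery.
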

\begin{proof}
    Let $v,w \in \mathbb R^n$. We have
    \begin{align*}
        \Drm^2\E(x) [v,w]&= \frac{1}{2}(v^TAw + w^TAv) 
                \\&= v^TAw \\&= 
                v^T MM^{-1}Aw
                \\&= \langle v, M^{-1}A w \rangle_M
    \end{align*}
\end{proof}

The $\langle \cdot, \cdot\rangle_M$-conjugate gradient (CG) method with Fletcher--Reeves CG-parameter $\beta_k$\cite{FR64} and Hessian step size $\alpha_k$ is given in Algorithm \ref{alg:cg1}. Using Lemmas \ref{lem:1} and \ref{lem:2} on the expressions in lines 5 and 9 yields the well-known classical preconditioned conjugate gradient method (PCG) in Algorithm \ref{alg:cg2}. As can be seen, the possibly expensive evaluations of the inner product in lines 5 and 9 can be completely circumvented. Consequently, CG with respect to the $M$-inner product can be executed without ever explicitly evaluating $\langle \cdot, \cdot \rangle_M$.

\begin{figure*}[ht]

\begin{minipage}{0.48\textwidth}
\small
\begin{algorithm2e}[H]
\SetKwInOut{Input}{input}
\SetAlgoLined
\DontPrintSemicolon 
\Input{s.p.d. matrices $A,M \in \mathbb R^{n \times n}$, $b \in \mathbb R^n$, init. guess $x_0 \in \mathbb R^n$}
\BlankLine
$r_0 = Ax_0- b$\;
$g_0 = M^{-1} r_0$\;
$d_0 = -g_0$ \;
 \For{$k=0, 1, 2, \dots$ until convergence}{
    $\alpha_k = -\frac{\langle g_k, d_k\rangle_M}{\langle d_k, M^{-1}A d_k\rangle_M} \phantom{\frac{ r_k^T d_k}{d_k^TA d_k}}$  \; 
    $x_{k+1} = x_k + \alpha_k d_k$\;
    $r_{k+1} = r_k + \alpha_k Ad_k$\;
    $g_{k+1} = M^{-1} r_{k+1}$ \;
    $\beta_{k} = \frac{\langle g_k, g_k\rangle_M}{\langle g_{k+1}, g_{k+1}\rangle_M} \phantom{\frac{r_k^T g_k}{r_{k+1}^Tg_{k+1}}}$\;
    $d_{k+1} = - g_{k+1} + \beta_k d_k$\;
 }
 \KwRet{$x_{k+1}$}
 \caption{$\langle \cdot, \cdot \rangle_M$-CG}
     \label{alg:cg1}
\end{algorithm2e}
\end{minipage}\hfill
\begin{minipage}{0.48\textwidth}
\small
\begin{algorithm2e}[H]
\SetKwInOut{Input}{input}
\SetAlgoLined
\DontPrintSemicolon 
\Input{s.p.d. matrices $A,M \in \mathbb R^{n \times n}$, $b \in \mathbb R^n$, init. guess $x_0 \in \mathbb R^n$}
\BlankLine
$r_0 = Ax_0- b$\;
$g_0 = M^{-1} r_0$\;
$d_0 = -g_0$ \;
 \For{$k=0, 1, 2, \dots$ until convergence}{
    $\alpha_k = -\frac{ r_k^T d_k}{d_k^TA d_k}$ \;
    $x_{k+1} = x_k + \alpha_k d_k$\;
    $r_{k+1} = r_k + \alpha_k Ad_k$\;
    $g_{k+1} = M^{-1} r_{k+1}$ \;
    $\beta_{k} = \frac{r_k^T g_k}{r_{k+1}^Tg_{k+1}}$ \;
    $d_{k+1} = - g_{k+1} + \beta_k d_k$\;
 }
 \KwRet{$x_{k+1}$}
 \caption{PCG \phantom{$\langle \cdot, \cdot \rangle_M$}}
     \label{alg:cg2}
\end{algorithm2e}

\end{minipage}

\end{figure*}

In other words, preconditioning and change of the inner product are equivalent: Preconditioning with $M^{-1}$ is mathematically equivalent to using the iner product $\langle \cdot, \cdot \rangle_M$ for the linear conjugate gradient method in $\mathbb R^n$. 

PCG from algorithm \ref{alg:cg2} illustrates the exact setting in which metric-free approaches. We have
\begin{itemize}
    \item a ``cheap'' inner product $\langle v, w\rangle = v^Tw$ to calculate the relevant quantities like step size and CG-parameter and
    \item an ``expensive'' inner product $\langle v, w\rangle_M = v^TMw$ that is only used for the gradient and never applied explicitly. 
\end{itemize}
Using this approach allows to use the gradient with respect to a better-performing inner product without the overhanging cost for calculation of step sizes and other parameters -- the expensive inner product $\langle \cdot, \cdot \rangle_M$ is never applied explicitly. Additionally, it reduces numerical errors in CG-parameter and step size, since usually either gradient calculation or inner product application involve a matrix inversion, which usually is solved inexactly by an iterative scheme.

\section{Metric-Free Riemannian Optimization}

In this section we show that the result above is also applicable to Riemannian optimization methods that leverage first-order quantities.  We first discuss the general idea that enables metric-free Riemannian optimization and then adopt well-known quantities like CG parameters and step size control into the metric-free framework. We then explicitly state how to transfer quantities from different Riemannian approaches into the metric-free setting. As a rule of thumb, if quantities only make use of first-order information, transferability to the metric-free framework is possible.  

\subsection{General Idea}

Let $(\M,\langle \cdot, \cdot \rangle)$ be a Riemannian manifold and $f: \M \to \mathbb R$ be a differentiable function. 
We recall the definition of the Riemannian gradient from \eqref{eq:riem_grad} for arbitrary $x \in \M$, $v \in \Tan_x\M$
\begin{equation*}
    \langle \grad f(x), v\rangle_x = \Drm f(x) [v].
\end{equation*}
The general idea of metric-free Riemannian optimization is to formulate all metric evaluations in this way, consequently making it possible to calculate them via evaluating the differential.
This is exactly the equation given from the definition of the Riemannian gradient \eqref{eq:riem_grad}. In order to make sense of metric-free Riemannian optimization, we however flip the interpretation: Quantities that are usually expressed as the metric applied to the Riemannian gradient and any tangent vector can instead be expressed as a directional derivative. Specifically, these quantities are independent of the metric chosen, and the only first-order quantity that directly depends on the metric is the gradient itself. Leveraging this, we are able to reformulate quantities from Riemannian optimization without applying the metric, resulting in \emph{metric-free} algorithms, meaning algorithms free of metric applications. Obviously, metric-free Riemannian algorithms are not metric independent, since the Riemannian gradient and other quantities still depend on the metric. In Section 5, we will briefly discuss how optimization on non-Riemannian manifolds, meaning without existance of a Riemannian metric, can look.

In order to formulate metric-free quantities, we need access to the Riemannian gradient $\grad f(x)$ and the differential $\Drm f(x)$. A possible setting in practice is closely related to the case discussed in the previous section. We have a manifold $\M$ with 
\begin{itemize}
    \item a ``cheap'' Riemannian metric $\langle \cdot, \cdot \rangle_a$ where both metric application and gradient calculation are comparatively cheap and
    \item an ``expensive'' Riemannian metric $\langle \cdot, \cdot \rangle_b$ where both metric application and gradient calculation are comparatively expensive.
\end{itemize}
This is especially the case, when we have 
$$ \langle v,w \rangle_{b,x} = \langle M_x(v), w \rangle_{a,x}\qquad \text{for all }x\in \M, v,w \in \Tan_x\M$$
where $M_x : \Tan_x\M \to \Tan_x\M$ is a linear, symmetric (with respect to $\langle \cdot, \cdot \rangle_{a,x}$), positive definite operator smoothly depending on $x$. Then, applying $\langle \cdot, \cdot \rangle_{b,x}$ costs one application of $\langle \cdot, \cdot \rangle_{a,x}$ and one application of $M_x$, and for a differentiable $f \colon \M \to \mathbb R$ the gradient calculation of $\grad_b f(x)$ costs a gradient calculation of $\grad_a f(x)$ and one inversion of $M_x$. Consequently, the derivative of $f \colon \M \to \mathbb R$ can be expressed via
$$\Drm f(x) [v] = \langle \grad_a f(x), v \rangle_{a,x},$$
which is far cheaper and possibly more numerically accurate than
$$\Drm f(x) [v] = \langle \grad_b f(x), v \rangle_{b,x} = \langle M_x M_x^{-1} \grad_a f(x), v \rangle_{a,x}.$$
Especially when $\grad_b f(x)$ is not used for any further purposes, the latter formula generates a lot of unnecessary computational overhead over the former, and metric-free approaches allow us to avoid such unnecessary and expensive operations.
Such a setting will later be revisited in the numerical experiments.

In the following, we label the equations for the expression as
\begin{itemize}
    \item (XXa) for the Euclidean formulation;
    \item (XXb) for the usual Riemannian formulation;
    \item (XXc) for the metric-free formulation.
\end{itemize}
In Euclidean spaces, {(XXa)}, {(XXb)} and {(XXc)} are equivalent. The word ``usual'' here implies the way how the expression can be and is usually generalized. However, there does not exist a strictly canonical way of generalizing expressions to the Riemannian framework. We give a brief example to illustrate this.
\begin{exm}\label{ex:1}
\begin{subequations}
Consider an iterative method in $\mathbb R^n$ with gradients $g_k$ and search directions $d_k$ at iterates $x_k$, where the quantity
\begin{equation} \label{eq:euc_remark}
    \langle y_{k+1}, d_k \rangle, \qquad y_{k+1} = g_{k+1} - g_k
\end{equation}
is calculated. This appears for example in the Hestenes--Stiefel CG parameter \emph{\cite{HS52}}. The direct Riemannian generalization reads
\begin{equation} \label{eq:riem_remark}
    \langle y_{k+1}, \T(d_k) \rangle_{x_{k+1}}, \qquad y_{k+1} = g_{k+1} - \T (g_k),
\end{equation}
with vector transport $\T \colon \Tan_{x_{k}}\M \to \Tan_{x_{k+1}}\M$. This is a straight-forward translation of \emph{\eqref{eq:euc_remark}}. The original expression is optimized to use as few inner products as possible, which is useful in Euclidean space, since it reduces cost. However, this approach may be flawed in the Riemannian case, because it not only prevents metric-free evaluation, but also forces us to perform two vector transports. The expression \eqref{eq:riem_remark} contains the term $\langle \T (g_k), \T (d_k) \rangle_{x_{k+1}}$, where two directions are transported into the tangent space at $x_{k+1}$, just to take the metric there. Directly calculating the metric at $x_k$ seems to be a less error prone approach, especially since it is known that (unscaled) vector transports can be problematic \emph{\cite{RiWi2012}}. Using the expression equivalent to \emph{\eqref{eq:euc_remark}}
\begin{equation*}
    \langle g_{k+1}, d_k\rangle - \langle g_{k}, d_k\rangle
\end{equation*}
one arrives at a more natural Riemannian formulation
\begin{equation} \tag{14b'} \label{eq:riem_remark_2}
     \langle g_{k+1}, \T(d_k)\rangle_{x_{k+1}} - \langle g_{k}, d_k\rangle_{x_{k}},
\end{equation}
 which can also be found in \emph{\cite{Sato22}}. We note that \eqref{eq:riem_remark} and \eqref{eq:riem_remark_2} are in general not equivalent. They however coincide if an isometric transport, for example the parallel transport is used.  
This contains only  one vector transport and can also be formulated metric-free via
\begin{equation}
    \Drm f(x_{k+1})[\T(d_k)] - \Drm f(x_{k})[d_k].
\end{equation}
\end{subequations}
\end{exm}

As the example shows, the naive formulations of quantities in Riemannian optimization are obtained by Euclidean expressions minimizing the amount of inner product applications, which is not necessarily the best way of generalizing it to Riemannian optimization. Since metric-free expressions give a more natural formulation of those quantities, by minimizing the amount of transports that need to be performed, one could argue that those should be preferred. 

We also give a short remark, why second-order quantities are in genereal not generalizable to the metric-free framework.

\begin{rem}[Second-order quantities] \label{rem:hess}
    Second-order quantities, like the Riemannian Hessian and its evaluation, do indeed directly depend on the metric, since the curvature of the manifold does affect those. Thus, methods relying on second-order quantities in addition to first-order, like Riemannian Newton methods or step size control relying on the Hessian, can in general not be formulated metric-free. In case of the example from the previous section, we get transferability of the Hessian, since both $(\mathbb R^n, \langle \cdot, \cdot\rangle)$ and $(\mathbb R^n, \langle \cdot, \cdot \rangle_M)$ as Riemannian manifolds have constant curvature $0$. 
\end{rem}

\subsection{Search direction conditions} We examine conditions for the search direction $d_k \in \Tan_{x_k}\M$ at an iterate $x_k \in \M$.

\paragraph{Descent condition} The basic descent condition in the Euclidean case takes the form
\begin{subequations}
\begin{align}
    \label{eq:euc_desc_cond}
    \nabla f(x_k)^T d_k &< 0, \\
    \label{eq:riem_desc_cond}
    \langle \grad f(x_k), d_k\rangle_{x_k} &< 0, \\
    \label{eq:mf_desc_cond}
    \Drm f(x_k)[d_k] &< 0.
\end{align}
\end{subequations}
Transferability into the metric-free framework is a natural result, since the descent direction property of $d_k$ is independent of the metric.

\paragraph{Sufficienct descent condition} The sufficient descent direction, sometimes also called strong or uniform descent condition, is a stronger version of the descent condition. For a constant $c > 0$ it reads
\begin{subequations}
\begin{align}
    \label{eq:euc_sd_cond}
    \nabla f(x_k)^T d_k &\le -c \|\nabla f(x_k)\|^2, \\
    \label{eq:riem_sd_cond}
    \langle \grad f(x_k), d_k\rangle_{x_k} &\le -c\|\grad f(x_k)\|_{x_k}^2, \\
    \label{eq:mf_sd_cond}
    \Drm f(x_k)[d_k] &\le -c \, \Drm f(x_k)[\grad f(x_k)].
\end{align}
\end{subequations}

\paragraph{Angle condition} The angle condition bounds the angle between the negative gradient and search direction from below via $\cos(\angle (-\grad f(x_k), d_k))  \ge c $
for a given constant $1\ge c > 0$. It reads 
\begin{subequations}
\begin{align}
    \label{eq:euc_angle_cond}
    \frac{-\nabla f(x_k)^Td_k}{\|\nabla f(x_k)\|\,\|d_k\|} &\ge c, \\
    \label{eq:riem_angle_cond}
    \frac{-\langle \grad f(x_k), d_k \rangle_{x_k}}{\|\grad f(x_k)\|_{x_k}\,\|d_k\|_{x_k}} &\ge c, \\
    \label{eq:mf_angle_cond}
    \frac{- \Drm f(x_k) [d_k]}{\sqrt{\Drm f(x_k) [\grad f(x_k) ]}\,{\|d_k\|_{x_k}}} &\ge c.
\end{align}
\end{subequations}
Since the metric plays a integral part in calculating angles, a true metric-free formulation of the angle condition is not possible.
Indeed in \eqref{eq:mf_angle_cond} evaluation of the metric is needed for $\|d_k\|_{x_k} = \sqrt{\langle d_k, d_k\rangle_{x_k}}$. However, if there is a way to calculate $\|d_k\|_{x_k}$ without applying the metric, the angle condition can be checked metric-free. This is explored in Section 5.2. 

\subsection{Line search conditions}\label{sec:linesearch_conds} Line search conditions are conditions imposed on the step size $\alpha$ with given iterate $x_k \in \M$ and a search direction $d_k \in \Tan_{x_k}\M$ that, to ensure well-definedness is a descent direction. In this context, we also consider a retraction $\R$ and a vector transport $\T$.

\paragraph{Armijo condition}  The Armijo condition, or sufficient descent condition, is the most basic line search condition. It ensures that the value of the cost function decreases sufficiently. For a constant $0 < c_1 < 1$ it reads
\begin{subequations}\label{eq:armijo}
\begin{align}
    \label{eq:euc_armijo}
    f(x_k + \alpha d_k) &\le f(x_k) + c_1 \alpha \, \nabla f(x_k) ^T d_k, \\
    \label{eq:riem_armijo}
    f(\R_{x_k}(\alpha d_k)) &\le f(x_k) + c_1 \alpha  \langle \grad f(x_k), d_k\rangle_{x_k},\\
    \label{eq:mf_armijo}
    f(\R_{x_k}(\alpha d_k)) &\le f(x_k) + c_1 \alpha  \Drm f(x_k)[d_k].
\end{align}  
\end{subequations}

\paragraph{Goldstein condition}
The Goldstein condition imposes a lower bound on the cost function decrease in order to avoid steps that are too small. For a constant $0 < c_1 < \frac{1}{2}$, it reads
\begin{subequations}\label{eq:goldstein}
\begin{align}
        \label{eq:euc_goldstein}
    f(x_k + \alpha d_k) &\ge f(x_k) + (1 - c_1) \alpha \, \nabla f(x_k) ^T d_k, \\
    \label{eq:riem_goldstein}
    f(\R_{x_k}(\alpha d_k)) &\ge f(x_k) + (1 - c_1) \alpha  \langle \grad f(x_k), d_k\rangle_{x_k},\\
    \label{eq:mf_goldstein}
    f(\R_{x_k}(\alpha d_k)) &\ge f(x_k) + (1 - c_1) \alpha  \Drm f(x_k)[d_k].
\end{align}
\end{subequations}
The Goldstein condition is usually employed in conjunction with the Armijo condition, both using the same $c_1$. 

\paragraph{Curvature conditions} The curvature conditions state that the slope of $f$ along the trajectory $\alpha \mapsto x_k + \alpha d_k$ (in the Euclidean case) or $\alpha \mapsto \R_{x_k}(\alpha d_k)$ (in the Riemannian case) decreases sufficiently in order to approximate a first-order optimal step size. In the Riemannian case, we need to transport the search direction $d_k$ alongside the trajectory, for which we need the vector transport $\T$. We note that for mathematical consistency with the aforementioned concept, one needs to use the differentiated retraction \eqref{eq:diff_ret}, since then 
\begin{equation}\label{eq:transport_diffret_curv}
    \frac{\drm}{\drm t} (f(\R_{x_k}(t d_k))) = \Drm f(\R_{x_k}(t d_k))[\Drm \R_{x_k}(t d_k)[d_k]] = \Drm f(\R_{x_k}(t d_k))[\T_{t d_k}(d_k)].
\end{equation}
For a constant $0<c_2<1$, the weak curvature condition reads
\begin{subequations}\label{eq:weak_curv}
    \begin{align}
        \label{eq:euc_weak_curv}
        \nabla f(x_k + \alpha d_k)^Td_k &\ge c_2 \nabla f(x_k)^T d_k,\\
        \label{eq:riem_weak_curv}
        \langle \grad f( \R_{x_k} (\alpha d_k)), \T_{\alpha d_k}(d_k) \rangle_{\R_{x_k} (\alpha d_k)} &\ge c_2 \langle \grad f(x_k), d_k \rangle_{x_k}, \\
        \label{eq:mf_weak_curv}
        \Drm f(\R_{x_k} (\alpha d_k))[\T_{\alpha d_k}(d_k)] &\ge c_2 \Drm f(x_k) [d_k].
    \end{align}
\end{subequations}
The strong curvature condition reads
\begin{subequations}\label{eq:strong_curv}
\begin{align}
    \label{eq:euc_strong_curv}
    \nabla f(x_k + \alpha d_k)^Td_k &\le c_2 |\nabla f(x_k)^T d_k|, \\
    \label{eq:riem_strong_curv}
    \langle \grad f( \R_{x_k} (\alpha d_k)), \T_{\alpha d_k}(d_k) \rangle_{\R_{x_k} (\alpha d_k)} &\le c_2 |\langle \grad f(x_k), d_k \rangle_{x_k}|, \\
    \label{eq:mf_strong_curv}
        \Drm f(\R_{x_k} (\alpha d_k))[\T_{\alpha d_k}(d_k)] &\le c_2 |\Drm f(x_k) [d_k]|.
\end{align}
\end{subequations}

When combining the weak/strong curvature condition with the Armijo condition for $0 < c_1 < c_2 < 1$, one obtains the weak/strong Wolfe conditions. In the Riemannian optimization literature, Wolfe conditions are sometimes already formulated in a metric-free fashion, e.g. in \cite{RiWi2012}.

Both \eqref{eq:mf_weak_curv} and \eqref{eq:mf_strong_curv} can be very useful in cases where backtracking-based line searches are employed. Not only do they allow to check the conditions without evaluation of the metric, they also avoid calculation of the gradient, which generally is very costly.

\subsection{Step size strategies} Step size strategies are algorithms that return a step size $\alpha$ for given iterate $x_k \in \M$ and search direction $d_k \in \Tan_{x_k}\M$. We differentiate between heuristics that directly return a step size, and iterative backtracking-like schemes that calculate a step size such that some line search conditions, as described in the previous section, are met. 

\paragraph{Polyak step size} The Polyak step size \cite{Polyak69B} is a popular gradient-only step size strategy in gradient and subgradient methods for (geodesic) convex optimization. Given the exact value or an estimate of the minimum $f^\star$ of $f$, it reads
\begin{subequations}
\begin{align}
    \label{eq:euc_polyak}
        \alpha_k &= \frac{f(x_k) - f^\star}{\|\nabla f(x_k)\|^2}, \\
        \label{eq:riem_polyak}
        \alpha_k &= \frac{f(x_k) - f^\star}{\|\grad f(x_k)\|^2_{x_k}}, \\
        \label{eq:mf_polyak}
        \alpha_k &= \frac{f(x_k) - f^\star}{\Drm f(x_k) [\grad f(x_k)]}.
\end{align}
\end{subequations}

\paragraph{Barzilai-Borwein step size} The Barzilai-Borwein (BB) step size strategy \cite{BB1988} is also a gradient-only heuristic that calculates a step size by approximating second-order information by tracking how gradients change over iterations. It is very popular for gradient methods and oftentimes performs better than the exact line search. It alternates between a short and a long step and takes an initial step size $\alpha_0$. Here we only consider (Riemannian) gradient descent and since we always transport from the tangent space at $x_{k-1}$ to the one at $x_k$, we use the notation $\T = \T_{-\alpha_{k-1}\grad f(x_{k-1})}$. The BB step size reads
\begin{subequations}

    \begin{align}
        \label{eq:euc_bb}
        &\begin{aligned}
            \alpha_k^{\rm long} &= \frac{(x_{k} - x_{k-1})^T(x_{k} - x_{k-1})}{(x_{k} - x_{k-1})^T(\nabla f(x_{k}) - \nabla f(x_{k-1}))} ,\\
            \alpha_k^{\rm short} &= \frac{(x_{k} - x_{k-1})^T(\nabla f(x_{k}) - \nabla f(x_{k-1}))}{(\nabla f(x_{k}) - \nabla f(x_{k-1}))^T(\nabla f(x_{k}) - \nabla f(x_{k-1}))},
        \end{aligned} \\
        \label{eq:riem_bb}
        &\begin{aligned}
            \alpha_k^{\rm long} &=  \frac{\alpha_{k-1}\|\grad f(x_k)\|^2_{x_{k-1}}}{\langle \T(\grad f(x_{k-1})), \T(\grad f(x_{k-1})) - \grad f(x_k)   \rangle_{x_k}} ,\\
            \alpha_k^{\rm short} &= \frac{\alpha_{k-1}\langle \T(\grad f(x_{k-1})), \T(\grad f(x_{k-1})) - \grad f(x_k)   \rangle_{x_k}}{\langle \grad f(x_k) -  \T(\grad f(x_{k-1}) , \grad f(x_k) -  \T(\grad f(x_{k-1}))   \rangle_{x_k}},
        \end{aligned} \\
        \label{eq:mf_bb}
        &\begin{aligned}
            \alpha_k^{\rm long} &=  \frac{\alpha_{k-1}\Drm f(x_{k-1})[\grad f(x_{k-1})]}{\Drm f(x_{k-1})[\grad f(x_{k-1})] - \Drm f(x_{k})[\T (\grad f(x_{k-1}))]} ,\\
            \alpha_k^{\rm short} &= \frac{\alpha_{k-1}\Drm f(x_{k-1})[\grad f(x_{k-1})] - \Drm f(x_{k})[\T (\grad f(x_{k-1}))]}{\Drm f(x_{k-1})[\grad f(x_{k-1})] + \Drm f(x_{k})[\grad f(x_{k}) - 2\T (\grad f(x_{k-1})) ]},
        \end{aligned}
    \end{align}
\end{subequations}
where we substituted $x_{k} - x_{k-1}$ by $-\alpha_{k-1} \grad f(x_{k-1})$ in the tangent space at $x_{k-1}$ -- this is consistent with using the inverse retraction to generalize the difference of two points -- and by $-\alpha_{k-1} \T(\grad f(x_{k-1}))$ in the tangent space at $x_{k}$.

We note that \eqref{eq:riem_bb} and \eqref{eq:mf_bb} generally only coincide if the exponential retraction and parallel transport are used. The usual generalization transports everything into the tangent space at the current iterate and calculates metrics there, in order to minimize the amount of metric applications. In the metric-free approach, we split expressions into parts that comply with quantities that are generalizable metric-free and thus minimize the amount of necessary vector transports. This is analogous to the case explored in Example \ref{ex:1}.


\paragraph{Backtracking line search} For an iterate $x_k \in \M$ and a search direction $d_k \in \Tan_{x_k}\M$, a~backtracking line search usually starts with a trial step size $\alpha_0$ or an initial bracket $[\alpha_0, \beta_0]$ and repeatedly updates it, until it finds step size such that an acceptance (lines earch) condition is met. Armijo and nonmonotone backtracking only shrink the trial step size by a prescribed factor $\rho$ and thus do not rely on the metric anyways. Bracketing strategies like Hager-Zhang line search \cite{HagerZ2005} use quadratic interpolation by bisection of the derivative and thus for a trial step size $\alpha$ need to evaluate
\begin{subequations}
    \begin{align}
        \label{eq:euc_slope}
        \frac{\drm}{\drm t} f(\R_{x_k}(td_t)) \Bigg|_{t = \alpha}  &= \nabla f(x_k + \alpha d_k)^Td_k, \\
        \label{eq:riem_slope}
        \frac{\drm}{\drm t}f(\R_{x_k}(td_t)) \Bigg|_{t = \alpha}  &= \langle \grad f( \R_{x_k} (\alpha d_k)), \T_{\alpha d_k}(d_k) \rangle_{\R_{x_k} (\alpha d_k)}, \\
        \label{eq:mf_slope}
        \frac{\drm}{\drm t} f(\R_{x_k}(td_t)) \Bigg|_{t = \alpha} &= \Drm f(\R_{x_k} (\alpha d_k))[\T_{\alpha d_k}(d_k)],        
    \end{align}
\end{subequations}
with the differentiated retraction transport from \eqref{eq:transport_diffret_curv}.

\subsection{Riemannian conjugate gradient} 

The Riemannian conjugate gradient methdod, RCG in short, is a simple improvement upon Euclidean Riemannian gradient descent. The search directions are chosen to be ``conjugate'' to the previous ones, often leading to faster convergence in practice. The search direction reads 
\begin{equation*}
    d_{k+1} = - g_{k+1} + \beta_{k+1} \T_{\alpha d_k}(d_k)
\end{equation*}
for $g_{k+1} = \grad f(x_{k+1})$ and the vector transport $\T$. There exists a wide variety for the choice conjugate gradient parameters $\beta$, thus we limit ourselves to the ones listed in \cite{Sato22}, but stress that the CG parameters compatible with the metric-free framework are not limited to those -- specifically hybrid parameters as discussed in \cite{SaIi21} can be adopted in a straight forward fashion. In the following, we abbreviate the transported gradient by $\T(g_k)$ and the transported search direction by $\T(d_k)$.

\paragraph{Hestenes-Stiefel} The Hestenes-Stiefel (HS) CG parameter \cite{HS52} is given by 
\begin{subequations}
    \begin{align}
        \beta_{k+1}^{\rm HS} &= \frac{g_{k+1}^Ty_{k+1}}{d_{k}^Ty_{k+1}}, && y_{k+1} = {g_{k+1} - g_k}, \\
        \beta_{k+1}^{\rm HS} &= \frac{\langle g_{k+1}, y_{k+1}\rangle_{x_{k+1}}}{\langle g_{k+1}, \T(d_k) \rangle_{x_{k+1}} - \langle g_k, d_k \rangle_{x_{k}}}, && y_{k+1} = {g_{k+1} - \T(g_k)}, \\
        \beta_{k+1}^{\rm HS} &= \frac{\Drm f(x_{k+1})[y_{k+1}]}{\Drm f(x_{k+1})[\T(d_k)] - \Drm f(x_k)[d_k] }, && y_{k+1} = {g_{k+1} - \T(g_k)}.
    \end{align}
\end{subequations}

\paragraph{Fletcher-Reeves} The Fletcher-Reeves (FR) CG parameter \cite{FR64} is given by
\begin{subequations}\label{eq:param_fr}
    \begin{align}
        \beta_{k+1}^{\rm FR} &= \frac{\|g_{k}\|^2}{\|g_{k+1}\|^2}, \\
        \beta_{k+1}^{\rm FR} &= \frac{\|g_{k}\|_{x_k}^2}{\|g_{k+1}\|_{x_{k+1}}^2}, \\
        \beta_{k+1}^{\rm FR} &= \frac{\Drm f(x_k)[g_k]}{\Drm f(x_{k+1})[g_{k+1}]}.
    \end{align}
\end{subequations}

\paragraph{Polak-Ribiere-Polyak} The Polak-Ribiere-Polyak (PRP) CG parameter \cite{PR69, Polyak69} is given by
\begin{subequations}\label{eq:param_prp}
    \begin{align}
        \beta_{k+1}^{\rm PRP} &= \frac{g_{k+1}^Ty_{k+1}}{\|g_k\|^2},  && y_{k+1} = {g_{k+1} - g_k}, \\
        \beta_{k+1}^{\rm PRP} &= \frac{\langle g_{k+1}, y_{k+1}\rangle_{x_{k+1}}}{\|g_k\|_{x_k}^2}, && y_{k+1} = {g_{k+1} - \T(g_k)}, \\
        \beta_{k+1}^{\rm PRP} &= \frac{\Drm f(x_{k+1})[y_{k+1}]}{\Drm f(x_k)[g_k]}, && y_{k+1} = {g_{k+1} - \T(g_k)}.
    \end{align}
\end{subequations}

\paragraph{Dai-Yuan} The Day-Yuan (DY) CG parameter \cite{DY99} is given by
\begin{subequations}
    \begin{align}
        \beta_{k+1}^{\rm DY} &= \frac{\|g_{k}\|^2}{d_{k}^T(g_{k+1} - g_k)}, \\
        \beta_{k+1}^{\rm DY} &= \frac{\|g_{k}\|_{x_k}^2}{\langle g_{k+1}, \T(d_k) \rangle_{x_{k+1}} - \langle g_k, d_k \rangle_{x_{k}}}, \\
        \beta_{k+1}^{\rm DY} &= \frac{\Drm f(x_k)[g_k]}{\Drm f(x_{k+1})[\T(d_k)] - \Drm f(x_k)[d_k] }.\\
    \end{align}
\end{subequations}

\paragraph{Hybrid} Hybrid parameters take the form
\begin{equation}\label{eq:param_hybrid}
   \beta_{k+1}^{\rm Hybrid} = \max\{\min \{ \beta_{k+1}^{\rm A}, \beta_{k+1}^{\rm B}\}, 0\}
\end{equation}
for parameters $\beta_{k+1}^{\rm A}, \beta_{k+1}^{\rm B}$ with ${\rm A}, {\rm B} \in \{{\rm HS},{\rm FR},{\rm PRP},{\rm DY},\dots\}$
and are thus generalizable to the metric-free framework.


\section{Outlook: Optimization without Riemannian metric.}

Although a lot of quantities from Riemannian optimization can be expressed as solely depending on the derivative, as has been shown in the previous sections, there are notable exceptions, which prevent a straight forward translation of optimization algorithms to non-Riemannian manifolds. This includes
\begin{itemize}
    \item the Riemannian gradient and related search directions,
    \item the Riemannian Hessian and the related Riemannian Newton search direction.
\end{itemize}
In this section, we consider two non-Riemannian cases, where these quantities exist and then discuss how optimization methods can be constructed.

\subsection{Banach manifolds}

A \emph{Banach manifold} is a manifold modeled on a Banach space. In contrast to a Finsler manifold, it carries no canonical norm on its tangent bundles that varies continuously.

\subsubsection{Newton's method}

Without a metric, the notion of a Riemannian Hessian as a symmetric map on the tangent space does no longer make sense. Thus, the naive approach of approximating the Riemannian Hessian $\mathrm{Hess}\ f(x)$ and calculating $\mathrm{Hess}\ f(x)^{-1}[\grad_\M f(x)]$ for a Newton search direction is not possible.

The approach presented here follows the construction from \cite{WeiS2025B, WeiS2025A}, where the interested reader is referred to. We only present a brief overview here. Consider a smooth map ${f \colon \M \to \mathbb R}$, then for any extremum $x\in\M$ of $f$ it holds that $\Drm f(x) = 0_x^\ast \in
\Tan^\ast _x\M$, where $\Tan^\ast _x\M$ denotes the co-tangent space. We are thus looking for a zero of the map
\begin{equation*}
    \begin{aligned}
        \Drm f \colon \M \to \Tan^\ast\M
    \end{aligned}
\end{equation*}
on the co-tangent bundle. This is more involved than in the Euclidean case, since the space where the right hand side lives now depends on the unknown $x$. Vector-back transports
\[\overset{\leftarrow}{V_y}(x)\colon\Tan^\ast_x\M \to \Tan^\ast_y\M,\]
which describe how neighboring co-tangent spaces are related, are used to induce
a \emph{connection map} $Q$ for all $v \in \Tan^\ast\M$ with
\[Q_v \colon \Tan_v\Tan^\ast\M \to \Tan^\ast_{p(v)} \M,\]
where $p(v) \in \M$ defines the foot point of $v$. Constructing the linear mapping at $x \in \M$
\[ Q_{\Drm f(x)} \circ \Drm^2 f(x) \colon \Tan_x\M \to \Tan_x^\ast \M\]
gives us the well-defined, but not necessarily solvable \emph{Newton equation} for the Newton direction $\delta x \in \Tan_x\M$
\[Q_{\Drm f(x)} \circ \Drm^2 f(x)\ (\delta x) + \Drm f(x) = 0_x^\ast.\]

\subsubsection{Optimization on Banach manifolds}

On Banach manifolds, the derivative of the objective is well-defined, and, as long as a vector-back transport is provided, one has access to a Newton search direction. The following quantities are thus compatible with the Banach manifold setting.

\begin{itemize}
    \item \textbf{Search direction conditions:}
    \begin{itemize}
        \item Descent condition
    \end{itemize}
    \item \textbf{Line search conditions:}
    \begin{itemize}
        \item Armijo condition
        \item Goldstein condition
        \item Curvature conditions
    \end{itemize}
    \item \textbf{Step size strategies:}
    \begin{itemize}
        \item Backtracking line search
    \end{itemize}
\end{itemize}
Consequently, one can check if the Newton direction is a descent direction, and can use backtracking or bracketing strategies for the line search, satisfying some line search condition. 

In \cite{WeiS2025A}, under some conditions, local superlinear convergence rates for this variant of Newton's method (with constant step size $1$) are established.


\subsection{Finsler manifolds}
A {Finsler manifold} is a tuple $(\M, \|\cdot\|_\_)$ where $\|\cdot\|_x$ is a norm on $\Tan_x\M$ smoothly depending on $x \in \M$. In the infinite dimensional setting, we require the tangent space to be complete with respect to the norm, meaning $(\Tan_x\M, \|\cdot\|_x)$ is a Banach space.

\subsubsection{Finsler gradient}
Following the Golomb-Tapia definition \cite{GT72} we can generalize the notion of gradients to Banach spaces. For a Banach space $E$ and a Fréchet differentiable function ${f\colon U \to \mathbb R}$, $U\subset E$ the \emph{metric gradient} at $x \in U$ is given by 
\begin{equation}\label{eq:metric_gradient}
    \nabla_E f(x) = \mathop{\mathrm{argmax}}\limits_{\eta \in E, \|\eta\| \le \|\Drm f(x)\|_{x}} \Drm f(x)[\eta].
\end{equation}
On Hilbert spaces the metric gradient corresponds to the gradient, meaning
$$\nabla_Ef(x) = \{ \nabla f(x)\}.$$
It is shown in \cite{GT72} that $\nabla_E f(x)$ is a closed convex subset of $E$. If $E$ is reflexive, $\nabla_E f(x)$ is not empty, and if $E$ is strictly convex (meaning the unit ball is strictly convex), then $\nabla_E f(x)$ consists of at most one point. This notion of gradients is used in \cite{Penot2002} to define a descent algorithm, and it is shown that any cluster point of a sequence defined by a steepest descent algorithm in a general normed vector space is a critical point.

Generalizing \eqref{eq:metric_gradient} to Finsler manifolds is straight forward. 
Since the name metric gradient clashes with the naming of the (Riemannian) metric, we propose to name it \emph{Finsler gradient} in order to avoid confusion. This is analogous to naming it Riemannian gradient on a Riemannian manifold. For a Finsler manifold  $(\M, \|\cdot\|)$ and a Fréchet differentiable function ${f\colon\M\to\mathbb R}$, define the \emph{Finsler gradient} of $f$ at $x \in \M$ 
\begin{equation}
    \grad_\M f(x) = \mathop{\mathrm{argmax}}\limits_{\eta \in \Tan_x\M, \|\eta\|_x \le \|\Drm f(x)\|_{x}} \Drm f(x)[\eta].
\end{equation}
The Finsler gradient corresponds to the Riemannian gradient if $\M$ is a Riemannian manifold.
It is easy to see that the proof from \cite{GT72} still holds in the Finsler manifold case. Consequently $\grad_\M f(x)$ is a closed convex subset of $\Tan_x\M$. If $\Tan_x\M$ is reflexive, $\grad_\M f(x)$ is not empty, and if $\Tan_x\M$ is strictly convex, then $\grad_\M f(x)$ consists of at most one point. 

\subsubsection{Optimization on Finsler manifolds}

On Finsler manifolds, the gradient, a norm on the tangent space and the derivative of the objective are well-defined. Since every Finsler manifold is a Banach manifold, one additionally has access to a Newton search direction, if a vector-back transport is provided. Thus, all previously discussed quantities are well-defined in this setting. This means exactly the following quantities.
\begin{itemize}
    \item \textbf{Search direction conditions:}
    \begin{itemize}
        \item Descent condition
        \item Sufficient descent condition
        \item Angle condition
    \end{itemize}
    \item \textbf{Line search conditions:}
    \begin{itemize}
        \item Armijo condition
        \item Goldstein condition
        \item Curvature conditions
    \end{itemize}
    \item \textbf{Step size strategies:}
    \begin{itemize}
        \item Polyak step size
        \item Barzilai-Borwein step size
        \item Backtracking line search
    \end{itemize}
    \item \textbf{Riemannian conjugate gradient:}
    \begin{itemize}
        \item Hestenes-Stiefel parameter
        \item Fletcher-Reeves parameter
        \item Polak-Ribiere-Polyak parameter
        \item Dai-Yuan parameter
        \item Hybrid parameters
    \end{itemize}
\end{itemize}

Thus, using the Finsler gradient, one can construct gradient descent and conjugate gradient methods on Finsler manifolds, since all the relevant building blocks are available. For Newton-type methods, more involved search direction conditions, like the angle condition can be applied.

It also stands to reason that the convergence proof for these type of gradient descent methods from \cite{Penot2002} is applicable to the Finsler manifold generalization as well, however a rigorous proof for that is out of scope for this paper.

\section{Numerical experiments}

In order to illustrate effectiveness of metric-free methods, we consider a simple eigenvalue problem on the sphere, equipped with a non-standard metric, which we solve via Riemannian conjugate gradient. The numerical results show a significant reduction of CPU time and matrix-vector multiplications by using the metric-free approach. 

\subsection{Model problem}

For $m \in \mathbb N$, $n = m^2$ we consider the 2D Laplace matrix of an an $m\times m$  finite differences discretization grid
$$A = L \otimes I_m + I_m \otimes L \in \mathbb R^{n \times n}, \qquad L = \begin{pmatrix}
    2 & - 1 &  & & & \\
    -1 & 2 & -1 & & \\
     & -1 & \ddots & \ddots & \\
     & & \ddots & \ddots & -1\\
     & & & -1 & 2
\end{pmatrix} \in \mathbb R^{m \times m}.$$
It is well-known that $A$ is symmetric positive definite and the smallest eigenvalue goes to zero when $m$ goes to infinity. We are looking for a minimizer of
\begin{equation}\label{eq:min_problem}
    \min \limits_{ x \in \Sph } E(x) = \frac{1}{2} x^T A x,
\end{equation}
on the unit sphere
$$\Sph = \{ x \in \mathbb R^n \mid \|x\|_2^2 = 1\},$$
meaning we are looking for an eigenvector of the smallest eigenvalue of $A$. The tangent space at $x \in \Sph$ is given by
$$ \Tan_x \Sph = \{v \in \mathbb R^n  \mid v^Tx = 0\}$$
In order to solve \eqref{eq:min_problem} via Riemannian optimization, we equip $\Sph$ it with the metrics
$$\langle v,w\rangle_{2, x} = v^Tw, \quad \langle v,w\rangle_{M, x} = v^TMw, \qquad v,w \in \Tan_x\Sph,$$
where $M \approx A$ is a limited memory $LDL^T$ decomposition. The Euclidean $\langle \cdot, \cdot \rangle_{2,x}$-Riemannian gradient of \eqref{eq:min_problem} is given by
\begin{equation} \label{eq:residual}
    \grad_2 E(x) = Ax - x \,x^TAx
\end{equation}
The $\langle \cdot, \cdot \rangle_{M,x}$-Riemannian gradient of \eqref{eq:min_problem} is given by
\begin{equation}\label{eq:riem_grad_e}
   \grad_M E(x) = M^{-1}r(x) - M^{-1}x \frac{x^TM^{-1}r(x)}{x^T M^{-1}x},\qquad r(x) = \grad_2 E(x).
\end{equation}
This is the usual setting for metric-free Riemannian optimization, analogous to the example shown in Section 3. We have 
\begin{itemize}
    \item a ``cheap'' metric $\langle v,w\rangle_{2, x} $ to calculate the relevant quantities like step size and CG parameter and
    \item an ``expensive'' metric  $\langle v,w\rangle_{M, x} $ that is only used for the gradient and should never be applied explicitly. 
\end{itemize}

In order to solve \eqref{eq:min_problem}, we employ a Riemannian conjugate gradient method based on the {$\langle \cdot, \cdot \rangle_{M,x}$-gradient}, FR-PRP hybrid CG-parameter \eqref{eq:param_fr}, \eqref{eq:param_prp}, \eqref{eq:param_hybrid} and the cubic bracketing line search \cite{HagerZ2005}, satisfying the Wolfe conditions, comprised of the Armijo condition \eqref{eq:armijo} with $c_1 = 10^{-4}$ and strong curvature condition \eqref{eq:strong_curv} with $c_2 = 0.2$. We illustrate the superiority of metric-free approaches by comparing the checking of the curvature condition classically and metric-free.
\begin{figure*}[ht]

\begin{minipage}{0.48\textwidth}
\small
\begin{algorithm2e}[H]
\SetKwInOut{Input}{input}
\SetAlgoLined
\DontPrintSemicolon 
\Input{point $x$, direction $d$, step size $\alpha$, $M$-gradient $g$, $2$-gradient $r$}
\BlankLine
$\hat x = \R_x(\alpha d)$ \;
$\hat r = A\hat x - \hat x \hat x ^TA\hat x $ \;
$\hat g = M^{-1}  \hat{r} - M^{-1} \hat x \frac{\hat x^T M^{-1} \hat r}{\hat x M^{-1} \hat x} $\;
\KwRet{$|\mathcal T_{\alpha d}(d)^TM\hat g| \le c_2 d ^T M g$}
 \caption{Checking curvature condition classically}
     \label{alg:curv1}
\end{algorithm2e}
\end{minipage}\hfill
\begin{minipage}{0.48\textwidth}
\small
\begin{algorithm2e}[H]
\SetKwInOut{Input}{input}
\SetAlgoLined
\DontPrintSemicolon 
\Input{point $x$, direction $d$, step size $\alpha$, $M$-gradient $g$, $2$-gradient $r$}
\BlankLine
$\hat x = \R_x(\alpha d)$ \;
$\hat r = A\hat x - \hat x \hat x ^TA\hat x $ \;
\phantom{$\hat g = M^{-1}  \hat{r} - M^{-1} \hat x \frac{\hat x^T M^{-1} \hat r}{\hat x M^{-1} \hat x} $}\;
\KwRet{$|\mathcal T_{\alpha d}(d)^T\hat r| \le c_2 d ^T r $}
 \caption{Checking curvature condition metric-free\phantom{y}}
     \label{alg:curv2}
\end{algorithm2e}

\end{minipage}

\end{figure*}
In Algorithms \ref{alg:curv1} and \ref{alg:curv2} the checking of the strong curvature condition is outlined. Both procedures are mathematically equivalent, but differ significantly in the computational costs. In case the step size $\alpha$ is rejected and the gradient $\hat g$ cannot be re-used in the following iteration, the metric-free variant is two multiplications and two inversions of $M$ cheaper, since not only does the classical variant use the $M$-metric twice, it also needs to calculate the $M$-gradient $\hat g$. Even if $\hat g$ can be re-used, the metric-free appraoch is still two multiplications of $M$ cheaper. Similar effects can be observed for other building blocks of the RCG algorithms.

\subsection{Numerical results}

The experiment was implemented in \texttt{julia} using \texttt{Manopt.jl}\cite{Manopt2022}.
The source code is available at
\begin{center}
    \url{https://github.com/jonas-pueschel/MetricFreeTest}
\end{center}
The discretization was chosen as $m= 500$, thus $n = 250\,000$. The preconditioner $M = LDL^T$ was implemented using the \texttt{LimitedLDLFactorization.jl}\cite{Orban2019-yj} with \texttt{memory = 10}. 
We tested the regular $M$-Riemannian conjugate gradient (M-RCG) and metric-free $M$-Riemannian conjugate gradient (M-RCG-mf), which are mathematically equivalent. For illustrative purpose, we also added Euclidean inner product Riemannian conjugate gradient (2-RCG). As performance metrics, we collected the CPU-time and the amount of matrix-vector multiplications with $A$, $M$ and $M^{-1}$. The  convergence condition is  $E-E_{\rm ref} < 10^{-15}$, where the reference energy $E_{\rm ref}$ was calculated using (M-RCG) with a tolerance of $\| \grad_ME(x)\|_M \le 10^{-12}$.

\begin{figure}
    \centering
\input{test_plot}
    \caption{Convergence plots of the three methods with respect to CPU time (left) and matrix-vector multiplications (right). The metric-free variant outperforms the regular method significantly in all aspects.}
    \label{fig:placeholder}
\end{figure}

\begin{table}
    \centering
    \input{test_table}
    \caption{Comparison of the different methods. While M-RCG and M-RCG-mf are mathematically equivalent, the metric-free method only needs a fraction of $M^{-1}$ matrix-vector multiplications and no $M$ matrix-vector multiplications, resulting in a CPU time reduction of around 82\% and a reduction in matrix-vector multiplications of around 78\%.}
    \label{tab:placeholder}
\end{table}

As can be seen in Figure \ref{fig:placeholder} and Table \ref{tab:placeholder}, using the metric-free approach (M-RCG-mf) results in a CPU time reduction of around 82\% and a reduction in matrix-vector multiplications of around 78\% compared to (M-RCG). Since the methods are mathematically equivalent, there are no downsides of using the metric-free variant, and both (M-RCG) and (M-RCG-mf) take 230 iterations to converge, where the only difference between iterates are numerical inaccuracies in floating point arithmetric. Consequently, there are no downsides of using the metric-free variant, while the performance improves significantly.

\section{Conclusion}
In this paper, we formulated metric-free versions of many quantities in the framework of Riemannian optimization, allowing a formulation of Riemannian optimization methods where the Riemannian metric is never applied explicitly. While keeping mathematical accuracy and reducing numerical errors, these modifications show significant performance improvements in practice, when expensive non-standard Riemannian metrics are used. 

\bigskip\noindent
\textbf{Acknowledgments} The author thanks Ronny Bergmann for fruitful and constructive discussions on mathematical aspects of metric-free Riemannian optimization, as well as the joint implementation of metric-free Riemannian optimization into \texttt{Manopt.jl}.

\bibliographystyle{plain}
\bibliography{bibliography.bib}

\appendix

\end{document}

%% file: preamble.tex
\usepackage[toc,page]{appendix}
\usepackage{graphicx} 
\usepackage{fix-cm} 
\usepackage{anyfontsize}
\usepackage{url}
\usepackage{amsmath} 
\usepackage{amssymb}
\usepackage{bm, bbm}
\usepackage{polynom}
\usepackage[outline]{contour}
\usepackage{xcolor}
\usepackage{pgfplots}
\usepackage{mathtools}
\usepackage{stmaryrd}
\usepackage[labelfont=rm]{subcaption}
\pgfplotsset{
  compat=1.13,
}
\usepackage{multicol}
\usepackage{multirow}
\usepackage{xfrac}  
\usepackage{enumitem} 
\usepackage{float}
\usepackage{comment}
\usepackage{mathrsfs}
\usepackage[ruled,vlined, algo2e, linesnumbered]{algorithm2e}

\usepackage{tikz}
\usepackage{tikz-cd}
\usepackage{scalerel,stackengine}

\usepackage{natbib}
\usepackage{rotating}
\usepackage{pdflscape}

\theoremstyle{plain}
\newtheorem{thm}{Theorem}[section] 

\newtheorem{lem}[thm]{Lemma}
\newtheorem{rem}[thm]{Remark}

\makeatletter
\renewcommand\paragraph{\@startsection{paragraph}{4}{\z@}%
  {-3.25ex \@plus -1ex \@minus -.2ex}%
  {-1em}%
  {\normalfont\bfseries}}
\makeatother

\newtheorem{exm}[thm]{Example} 

\contourlength{0.1pt}
\contournumber{10}%
\definecolor{clr1}{RGB}{27,158,119}
\definecolor{clr2}{RGB}{217,95,2}
\definecolor{clr3}{RGB}{117,112,179}
\definecolor{clr4}{RGB}{231,41,138}
\definecolor{clr5}{RGB}{102,166,30}
\definecolor{clr6}{RGB}{230,171,2}
\definecolor{clr7}{RGB}{166,118,29}
\definecolor{clr8}{RGB}{102,102,102}

\newcommand{\E}{\mathcal{E}}

\newcommand{\M}{\mathcal{M}}

\newcommand{\R}{\mathcal{R}}
\newcommand{\invR}{\mathcal{L}}
\newcommand{\T}{\mathcal{T}}

\newcommand{\Sph}{\mathcal{S}^{n-1}}
\newcommand{\Tan}{\mathrm{T}}

\newcommand{\IR}{{\mathbb R}}

\newcommand{\grad}{\mathrm{grad}}

\newcommand{\Drm}{\mathrm{D}\,}
\newcommand{\drm}{\mathrm{d}}

\makeatletter
\newsavebox{\@brx}
\newcommand{\llangle}[1][]{\savebox{\@brx}{\(\m@th{#1\langle}\)}%
  \mathopen{\copy\@brx\mkern2mu\kern-0.9\wd\@brx\usebox{\@brx}}}
\newcommand{\rrangle}[1][]{\savebox{\@brx}{\(\m@th{#1\rangle}\)}%
  \mathclose{\copy\@brx\mkern2mu\kern-0.9\wd\@brx\usebox{\@brx}}}
\makeatother

\usepackage[most]{tcolorbox}

\definecolor{myGreen2}{RGB}{114,175,30} 
\definecolor{myRed}{RGB}{180,50,50}  
\definecolor{myOrange}{RGB}{225,92,22} 

\definecolor{unia-purple}{RGB}{173, 0, 124}
\definecolor{light-gray}{rgb}{0.8889,0.8889,0.8889}
\definecolor{cpl1}{rgb}{0.8889,0.4356,0.2781}
\definecolor{cpl2}{rgb}{0.0,0.6056,0.9787}
\definecolor{cpl3}{rgb}{0.2422,0.6433,0.3044}
\definecolor{cpl4}{rgb}{0.2, 0.2, 0.2}

\newlength{\plotwidth}
\newlength{\plotheight}
\usetikzlibrary{arrows.meta}
\usetikzlibrary{backgrounds}
\usepgfplotslibrary{patchplots}
\usepgfplotslibrary{fillbetween}
\pgfplotsset{%
    layers/standard/.define layer set={%
        background,axis background,axis grid,axis ticks,axis lines,axis tick labels,pre main,main,axis descriptions,axis foreground%
    }{
        grid style={/pgfplots/on layer=axis grid},%
        tick style={/pgfplots/on layer=axis ticks},%
        axis line style={/pgfplots/on layer=axis lines},%
        label style={/pgfplots/on layer=axis descriptions},%
        legend style={/pgfplots/on layer=axis descriptions},%
        title style={/pgfplots/on layer=axis descriptions},%
        colorbar style={/pgfplots/on layer=axis descriptions},%
        ticklabel style={/pgfplots/on layer=axis tick labels},%
        axis background@ style={/pgfplots/on layer=axis background},%
        3d box foreground style={/pgfplots/on layer=axis foreground},%
    },
}

%% file: test_table.tex
\begin{tabular}{l|r|r|r|r|r|r}
    method & CPU time & iterations & $A$ matvec& $M$ matvec& $M^{-1}$ matvec& total matvec\\ \hline 
    2-RCG & 54.04 s & 3804 & 8603 & 0 & 0 & 8603 \\ 
M-RCG & 34.80 s & 230 & 316 & 2158 & 1092 & 3566 \\ 
M-RCG-mf & 6.42 s & 230 & 316 & 0 & 460 & 776 \\ 
\end{tabular}